\documentclass{elsarticle}

\usepackage[english]{babel}
\usepackage{dsfont} 
\usepackage{graphicx}
\usepackage{color}
\usepackage[hyperindex,breaklinks]{hyperref}
\usepackage{enumerate}
\usepackage{amssymb,empheq} 
\usepackage{amsthm} 
\usepackage{cleveref}

\allowdisplaybreaks
\newtheorem{thm}{Theorem}[section]

\newtheorem{lem}[thm]{Lemma}
\newtheorem{prop}[thm]{Proposition}
\newtheorem{cor}[thm]{Corollary}

\theoremstyle{definition}

\newtheorem{dfn}[thm]{Definition}
\newtheorem{exm}[thm]{Example}

\theoremstyle{remark}

\newcommand{\exmsymbol}{\hfill$\circ$}

\newcommand{\cset}{\mathds{C}}

\newcommand{\nset}{\mathds{N}}

\newcommand{\rset}{\mathds{R}}

\newcommand{\diff}{\mathrm{d}}

\newcommand{\pos}{\mathrm{Pos}}

\newcommand{\supp}{\mathrm{supp}\,}

\newcommand{\id}{\mathrm{id}}
\newcommand{\one}{\mathds{1}}

\newcommand{\fD}{\mathfrak{D}}
\newcommand{\fd}{\mathfrak{d}}

\newcommand{\fg}{\mathfrak{g}}

\author{Philipp J.\ di Dio}
\address{Department of Mathematics and Statistics, University of Konstanz, Universit\"atsstra{\ss}e 10, D-78464 Konstanz, Germany}
\address{Department of Computer and Information Science, University of Konstanz, Universit\"atsstra{\ss}e 10, D-78464 Konstanz, Germany}
\address{Zukunftskolleg, Universtity of Konstanz, Universit\"atsstra{\ss}e 10, D-78464 Konstanz, Germany}
\address{philipp.didio@uni-konstanz.de}

\journal{working project}

\title{Linear Operators $T:\rset[x_1,\dots,x_n]\to\rset[x_1,\dots,x_n]$\\ and $K$-Positivity Preserver: A Short Review}

\begin{document}

\begin{abstract}
In the current short review we present the latest developments on linear maps $T:\rset[x_1,\dots,x_n]\to\rset[x_1,\dots,x_n]$, especially of $K$-positivity preserver, i.e., $Tp\geq 0$ on $K\subseteq\rset^n$ for all $p\in\rset[x_1,\dots,x_n]$ with $p\geq 0$ on $K$.
\end{abstract}

\begin{keyword}
posivity preserver \sep moments \sep non-negative polynomials
\MSC[2020] Primary 44A60; Secondary 30E05, 26C05.
\end{keyword}

\maketitle


\section{Introduction}

The vector space $\cset[x_1,\dots,x_n]$ and the cone
\[\pos(K) := \big\{f\in\rset[x_1,\dots,x_n] \,\big|\, f\geq 0\ \text{on}\ K\big\}\]
of non-negative polynomials on a closed set $K\subseteq\rset^n$ belong to the most studied objects in mathematics and it is the main topic of (real) algebraic geometry.
However, much less studied are linear maps
\[T:\rset[x_1,\dots,x_n]\to\rset[x_1,\dots,x_n]\]
especially with
\[T\pos(K)\subseteq\pos(K),\]
see e.g.\ \cite{guterman08,netzer10,borcea11,didio24posPresConst,
didio25KPosPresGen,didio25hadamardLanger,didio25PosToSoS}.
In the current short review we give an overview over the very recent developments in \cite{didio24posPresConst,didio25KPosPresGen,didio25hadamardLanger,didio25PosToSoS}.

\section{Preliminaries}
\label{sec:prelim}

\subsection{Moment Functionals and Sequences}

Let $n\in\nset$, let $K\subseteq\rset^n$, and let $s=(s_\alpha)_{\alpha\in\nset_0^n}$ be a sequence.
We call $s$ a \emph{$K$-moment sequence}, if there exists a measure $\mu$ with $\supp\mu\subseteq K$ and
\[s_\alpha = \int_K x^\alpha~\diff\mu(x)\]
for all $\alpha\in\nset_0^n$.
Let
\[L:\rset[x_1,\dots,x_n]\to\rset\]
be a linear functional.
We call $L$ a $K$-moment functional, if there exists a measure $\mu$ with $\supp\mu\subseteq K$ and
\[L(p) = \int_K p(x)~\diff\mu(x)\]
for all $p\in\rset[x_1,\dots,x_n]$.

\subsection{Frech\'et and LF-Spaces}

A \textit{Fr\'echet space} is a metrizable, complete, and locally convex topological vector space.
For our purposes it is sufficient to have the following example.

\begin{exm}[see e.g.\ {\cite[pp.\ 91--92, Ex.\ III]{treves67}}]
The vector space
\[\cset[[x_1,\dots,x_n]]\]
of formal power series is a Fr\'echet space with the family
\[\{p_d\}_{d\in\nset_0}\]
of semi-norms
\[p_{d}(f) = \sup_{\alpha\in\nset_0^n:|\alpha|\leq d} |c_\alpha|\]
of $f = \sum_{\alpha\in\nset_0^n} c_\alpha\cdot x^\alpha$.
\exmsymbol
\end{exm}

Clearly,
\[\cset[[x_1,\dots,x_n]] \cong \cset^{\nset_0^n}\]
and hence the space $\cset^{\nset_0^n}$ of complex sequences is a Frech\'et space.

The vector space $\cset[x_1,\dots,x_n]$ is a LF-space \cite[p.\ 130, Ex.\ I]{treves67}.
The general definition of a LF-space can be found e.g.\ in \cite[p.\ 126]{treves67} or \cite[Ch.\ II, 6.4]{schaef99}.
For our purposes it is sufficient to understand that a sequence of polynomials $(p_i)_{i\in\nset_0}$ converges to some $p\in\cset[x_1,\dots,x_n]$, i.e., $p_i\to p$ as $i\to\infty$, if and only if $\sup_{i\in\nset_0} \deg p_i < \infty$ and each $x^\alpha$-coefficient of $p_i$ converges to the $x^\alpha$-coefficient of $p$.

Consequently, the spaces $\cset[[x_1,\dots,x_n]]$ and $\cset[x_1,\dots,x_n]$ are dual to each other via
\[\langle f,p\rangle := \sum_{\alpha\in\nset_0^n} c_\alpha\cdot p_\alpha\]
for
\[f = \sum_{\alpha\in\nset_0^n} c_\alpha\cdot x^\alpha \in \cset[[x_1,\dots,x_n]]
\quad\text{and}\quad
p = \sum_{\alpha\in\nset_0^n} p_\alpha\cdot x^\alpha \in \cset[x_1,\dots,x_n].\]

The interested reader can find more about Fr\'echet and LF-spaces in e.g.\ \cite{treves67,schaef99}.

\subsection{Regular Frech\'et Lie Groups}

\begin{dfn}[see e.g.\ {\cite[p.\ 63, Dfn.\ 1.1]{omori97}}]\label{dfn:frechetLieGroup}
We call $(G,\,\cdot\,)$ a \textit{regular Fr\'echet Lie group} if the following conditions are fulfilled:
\begin{enumerate}[(i)]
\item $G$ is an infinite dimensional smooth Fr\'echet manifold.

\item $(G,\,\cdot\,)$ is a group.

\item The map $G\times G\to G$, $(A,B)\mapsto A\cdot B^{-1}$ is smooth.

\item The \textit{Fr\'echet Lie algebra} $\fg$ of $G$ is isomorphic to the tangent space $T_e G$ of $G$ at the unit element $e\in G$.

\item $\exp:\fg\to G$ is a smooth mapping such that
$\left.\frac{\diff}{\diff t} \exp(t u)\right|_{t=0} = u$
for all $u\in\fg$.

\item The space $C^1(G,\fg)$ of $C^1$-curves in $G$ coincides with the set of all $C^1$-curves in $G$ under the Fr\'echet topology.
\end{enumerate}
\end{dfn}

The interested reader can find more on infinite dimensional manifolds, differential calculus, Lie groups, and Lie algebras in e.g.\ \cite{omori97,schmed23}.

\section{Representations of Linear Operators on $\rset[x_1,\dots,x_n]$}
\label{sec:repr}

To study linear operators on $\rset[x_1,\dots,x_n]$, we have at first look at how they look like, i.e., we deal in this section with the different kinds of representations.
At first we have the \emph{canonical representation}.

\begin{thm}[folklore, \emph{canonical representation}]\label{thm:canRepr}
Let $n\in\nset$ and let
\[T:\rset[x_1,\dots,x_n]\to\rset[x_1,\dots,x_n]\]
be linear.
Then, for all $\alpha\in\nset_0^n$, there exist unique polynomials $q_\alpha\in\rset[x_1,\dots,x_n]$ such that
\begin{equation}\label{eq:canRepr}
T = \sum_{\alpha\in\nset_0^n} q_\alpha\cdot\partial^\alpha.
\end{equation}
\end{thm}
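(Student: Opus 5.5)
The plan is to construct the polynomials $q_\alpha$ recursively by induction on $|\alpha|$, using the action of $T$ on monomials. Before that, we note that the infinite sum in \eqref{eq:canRepr} is well defined on $\rset[x_1,\dots,x_n]$. For a fixed polynomial $p$ of degree $d$ we have $\partial^\alpha p=0$ whenever $|\alpha|>d$, so $\sum_\alpha q_\alpha\,\partial^\alpha p$ is a finite sum and needs no convergence argument. Consequently, \eqref{eq:canRepr} holds if and only if both sides agree on every monomial $x^\beta$, $\beta\in\nset_0^n$. Since $\partial^\alpha x^\beta=0$ unless $\alpha\leq\beta$ componentwise, and $\partial^\alpha x^\beta=\frac{\beta!}{(\beta-\alpha)!}x^{\beta-\alpha}$ for $\alpha\leq\beta$, the condition reads
\[T x^\beta=\sum_{\alpha\leq\beta}\frac{\beta!}{(\beta-\alpha)!}\,q_\alpha\, x^{\beta-\alpha}\qquad\text{for all }\beta\in\nset_0^n.\]

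This system is triangular with respect to the partial order $\leq$ on $\nset_0^n$. The term with $\alpha=\beta$ is $\beta!\,q_\beta$, and all remaining terms involve only $q_\alpha$ with $\alpha<\beta$. We therefore define recursively
\[q_\beta:=\frac{1}{\beta!}\Big(Tx^\beta-\sum_{\alpha<\beta}\frac{\beta!}{(\beta-\alpha)!}\,q_\alpha\,x^{\beta-\alpha}\Big),\]
starting from $q_0=T1$. The recursion is carried out by induction on $|\beta|$; this is well founded because $\alpha<\beta$ implies $|\alpha|<|\beta|$, and each $\beta$ has only finitely many predecessors. Each $q_\beta$ so defined is a polynomial. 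With these choices the displayed identity holds for every $\beta$, and linearity of $T$ extends it from monomials to all of $\rset[x_1,\dots,x_n]$.

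Uniqueness follows from the same triangularity. Suppose $\sum_\alpha q_\alpha\partial^\alpha=\sum_\alpha \tilde q_\alpha\partial^\alpha$, and put $r_\alpha=q_\alpha-\tilde q_\alpha$, so that $\sum_\alpha r_\alpha\partial^\alpha=0$. If not all $r_\alpha$ vanish, choose $\beta$ minimal with respect to $|\beta|$ among those with $r_\beta\neq0$. Applying the operator to $x^\beta$ gives $\beta!\,r_\beta=0$, because every other contributing term has $\alpha<\beta$ and therefore $r_\alpha=0$. This is a contradiction, so all $r_\alpha=0$.

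No step is a serious obstacle. The only point needing care is to state explicitly that the sum in \eqref{eq:canRepr} is understood pointwise, i.e.\ as a finite sum on each polynomial. Without this remark neither the existence nor the uniqueness statement has a clear meaning.
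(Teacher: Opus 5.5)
Your proof is correct and is essentially the standard argument: the paper gives no proof of its own but defers to \cite{netzer10}, where the $q_\alpha$ are likewise obtained by the triangular recursion $q_0=T1$, $q_\beta=\frac{1}{\beta!}\bigl(Tx^\beta-\sum_{\alpha<\beta}q_\alpha\,\partial^\alpha x^\beta\bigr)$ on monomials, with uniqueness following from the same triangularity. Your remark that the sum acts as a finite sum on each polynomial is what makes both the existence and the uniqueness claims meaningful.
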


A proof of the proceeding result can be found e.g.\ in \cite{netzer10}.
With the representation (\ref{eq:canRepr}), especially degree preserving maps, i.e.,
\[T\rset[x_1,\dots,x_n]_{\leq d} \subseteq \rset[x_1,\dots,x_n]_{\leq d}\]
for all $d\in\nset_0$, can easily be characterized, as we see in the next result.

\begin{lem}[{\cite[Lem.\ 5.1]{didio25KPosPresGen}}]
Let $n\in\nset$ and let
\[T:\rset[x_1,\dots,x_n]\to\rset[x_1,\dots,x_n] \quad\text{with}\quad T = \sum_{\alpha\in\nset_0^n} q_\alpha\cdot\partial^\alpha\]
be linear with unique $q_\alpha\in\rset[x_1,\dots,x_n]$ for all $\alpha\in\nset_0^n$.
Then the following are equivalent:
\begin{enumerate}[\;\, (i)]
\item $T\rset[x_1,\dots,x_n]_{\leq d} \subseteq \rset[x_1,\dots,x_n]_{\leq d}$ for all $d\in\nset_0$.

\item $\deg q_\alpha\leq |\alpha|$ for all $\alpha\in\nset_0^n$.
\end{enumerate}
\end{lem}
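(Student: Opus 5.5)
The plan is to use the canonical representation of Theorem~\ref{thm:canRepr} together with the recursive way in which the $q_\alpha$ are determined by the values $Tx^\beta$. Applying (\ref{eq:canRepr}) to a monomial $x^\beta$ gives only finitely many non-zero terms, since $\partial^\alpha x^\beta=0$ unless $\alpha\leq\beta$ componentwise. Explicitly,
\[
Tx^\beta = \sum_{\alpha\leq\beta} q_\alpha\cdot \frac{\beta!}{(\beta-\alpha)!}\, x^{\beta-\alpha}
= \beta!\, q_\beta + \sum_{\alpha\leq\beta,\ \alpha\neq\beta} \frac{\beta!}{(\beta-\alpha)!}\, q_\alpha\, x^{\beta-\alpha}.
\]
This identity drives both directions.

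For (ii)$\Rightarrow$(i), assume $\deg q_\alpha\leq|\alpha|$ for all $\alpha$. Each term $q_\alpha x^{\beta-\alpha}$ in the formula then has degree at most $|\alpha|+|\beta|-|\alpha|=|\beta|$. Hence $\deg Tx^\beta\leq|\beta|$. By linearity, $T$ maps $\rset[x_1,\dots,x_n]_{\leq d}$ into itself for every $d$.

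For (i)$\Rightarrow$(ii), I will argue by strong induction on $|\beta|$, which amounts to solving the displayed formula for $q_\beta$:
\[
q_\beta = \frac{1}{\beta!}\Big(Tx^\beta - \sum_{\alpha<\beta} \frac{\beta!}{(\beta-\alpha)!}\, q_\alpha\, x^{\beta-\alpha}\Big).
\]
\begin{itemize}
\item \emph{Base case} $\beta=0$: here $q_0=T1$, which has degree $0$ by (i) with $d=0$.
\item \emph{Inductive step}: every $\alpha<\beta$ has $|\alpha|<|\beta|$, so by the induction hypothesis $\deg q_\alpha x^{\beta-\alpha}\leq|\beta|$. Also $\deg Tx^\beta\leq|\beta|$ by (i) with $d=|\beta|$. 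Therefore $\deg q_\beta\leq|\beta|$.
\end{itemize}

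The only point needing care is the uniqueness of the $q_\alpha$ in Theorem~\ref{thm:canRepr}. The triangular recursion above is exactly what forces the $q_\alpha$ to be the canonical ones, so the bounds we derive apply to the unique coefficients in the statement. I do not expect a genuine obstacle here; the induction is routine once the expansion of $Tx^\beta$ is written down.
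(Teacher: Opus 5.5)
Your proof is correct and takes essentially the same route as the cited source (this review states the lemma without proof): expanding $Tx^\beta=\sum_{\alpha\preceq\beta}\frac{\beta!}{(\beta-\alpha)!}\,q_\alpha\, x^{\beta-\alpha}$ gives (ii)$\Rightarrow$(i) at once, and (i)$\Rightarrow$(ii) follows by induction on $|\beta|$ from the triangular recursion for $q_\beta$, which is the same recursion that produces the canonical representation. Two cosmetic points: in the base case you only get $\deg q_0\leq 0$, since $T1$ may be the zero polynomial; and uniqueness of the $q_\alpha$ is not actually needed, because the expansion of $Tx^\beta$ holds for any family of coefficients representing $T$.
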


A second representation can be view as a consequence or a version of the Schwartz Kernel Theorem, see e.g.\ \cite[Ch.\ 50]{treves67}.

\begin{lem}[{\cite[Rem.\ 2.3]{didio25hadamardLanger}}]
Let $n\in\nset$ and let
\[T:\rset[x_1,\dots,x_n]\to\rset[x_1,\dots,x_n]\]
be linear.
Then, for all $i\in\nset_0$, there are linear functionals
\[l_i:\rset[x_1,\dots,x_n]\to\rset\]
and polynomials $p_i\in\rset[x_1,\dots,x_n]$ such that
\begin{equation}\label{eq:tensorRepr}
T = \sum_{i\in\nset_0} l_i\cdot p_i, \quad\text{i.e.,}\quad Tf = \sum_{i\in\nset_0} l_i(f)\cdot p_i
\end{equation}
for all $f\in\rset[x_1,\dots,x_n]$.
\end{lem}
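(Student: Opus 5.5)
The plan is to build the representation from the monomial basis together with its dual coordinate functionals. Enumerate $\nset_0^n$ as $\{\alpha_i\}_{i\in\nset_0}$, for instance by graded lexicographic order. For each $i$ define $l_i:\rset[x_1,\dots,x_n]\to\rset$ as the coefficient functional
\[l_i(f) := f_{\alpha_i}\quad\text{for}\quad f=\sum_{\alpha\in\nset_0^n} f_\alpha\cdot x^\alpha,\]
so $l_i(f) = \langle x^{\alpha_i}, f\rangle$ in the duality between $\rset[[x_1,\dots,x_n]]$ and $\rset[x_1,\dots,x_n]$ from Section~\ref{sec:prelim}. Then set $p_i := T x^{\alpha_i}$. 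Each $l_i$ is linear, and each $p_i$ is a polynomial because $T$ maps into $\rset[x_1,\dots,x_n]$.

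Next I would check (\ref{eq:tensorRepr}). Fix $f$. Only finitely many $l_i(f)$ are nonzero, namely those with $|\alpha_i|\leq\deg f$. Hence $\sum_i l_i(f)\cdot p_i$ is a finite sum and no convergence question arises. By linearity of $T$,
\[Tf = T\Big(\sum_i f_{\alpha_i} x^{\alpha_i}\Big) = \sum_i l_i(f)\cdot T x^{\alpha_i} = \sum_i l_i(f)\cdot p_i.\]
This proves the statement.

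The only point that needs care is how to read the infinite sum $T=\sum_i l_i\cdot p_i$ at the operator level. I would state explicitly that it is meant pointwise, as in the ``i.e.'' of (\ref{eq:tensorRepr}), and that it is well defined because the sum is finite for each fixed $f$. One could also remark that the partial sums converge to $T$ in the topology of pointwise convergence on the LF-space. This is the Schwartz-kernel viewpoint, where $T$ corresponds to an element of $\rset[[x]]\,\hat\otimes\,\rset[x]$, but that is not needed for the statement. I do not expect any step to be a genuine obstacle. The lemma asserts existence only, not uniqueness, so no further argument is required.
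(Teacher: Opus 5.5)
Your proof is correct and complete. With $l_i$ the $x^{\alpha_i}$-coefficient functional and $p_i := Tx^{\alpha_i}$, the sum $\sum_i l_i(f)\cdot p_i$ has only finitely many nonzero terms and equals $Tf$ by linearity. (Strictly, the nonzero terms are those with $\alpha_i$ in the support of $f$, which all satisfy $|\alpha_i|\leq\deg f$.)

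The paper writes out no argument. It presents the lemma as a consequence, or a version, of the Schwartz Kernel Theorem (Tr\`eves, Ch.~50) and refers to the cited remark.

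Your route is the explicit, purely algebraic one. It needs neither the LF-topology, nor continuity of $T$, nor nuclearity. It also gives concrete choices: $l_i(f) = \frac{1}{\alpha_i!}(\partial^{\alpha_i}f)(0)$, which is the element $x^{\alpha_i}\in\rset[[x_1,\dots,x_n]]$ under the duality of Section~\ref{sec:prelim}, and $p_i = Tx^{\alpha_i}$.

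The kernel-theorem route is basis-free and conceptual. One first observes that every linear map on $\rset[x_1,\dots,x_n]$ is automatically continuous, since the LF-topology is the finest locally convex topology. Then $T$ is identified with its kernel in a completed tensor product of $\rset[[x_1,\dots,x_n]]$ and $\rset[x_1,\dots,x_n]$. In the monomial basis that kernel is precisely your $\sum_\alpha x^\alpha\otimes Tx^\alpha$. This viewpoint places the lemma in the Fr\'echet/LF-duality framework the review emphasizes and carries topological information about the correspondence $T\leftrightarrow$ kernel. For the bare existence claim, however, it is more machinery than needed, and your direct argument is the more economical proof.
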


Among the linear maps $T$ on $\rset[x_1,\dots,x_n]$, the linear operators with
\begin{equation}\label{eq:diagonalMaps}
Tx^\alpha = t_\alpha\cdot x^\alpha \quad\text{and}\quad t_\alpha\in\rset
\end{equation}
for all $\alpha\in\nset_0^n$ are called \emph{diagonal}.
Clearly, a diagonal operator is uniquely determined by its diagonal sequence $t=(t_\alpha)_{\alpha\in\nset_0^n}$ and every real sequence $t = (t_\alpha)_{\alpha\in\nset_0^n}$ gives a well-defined diagonal operator.
They have a special structure and therefore special properties, general linear maps on $\rset[x_1,\dots,x_n]$ do not possess.
Inserting (\ref{eq:diagonalMaps}) into \Cref{thm:canRepr} gives the following.

\begin{cor}[{\cite[Rem.\ 4.2]{didio25hadamardLanger}}]\label{thm:diagonalRepr}
Let $n\in\nset$ and let
\[T:\rset[x_1,\dots,x_n]\to\rset[x_1,\dots,x_n]\]
be a linear operator.
Then the following are equivalent:
\begin{enumerate}[(i)]
\item $T$ is a diagonal operator, i.e.,
\[Tx^\alpha = t_\alpha\cdot x^\alpha \quad\text{and}\quad t_\alpha\in\rset\]
for all $\alpha\in\nset_0^n$.

\item The operator $T$ is of the form
\begin{equation}\label{eq:diagRepr1}
T = \sum_{\alpha\in\nset_0^n} \frac{c_\alpha}{\alpha!}\cdot x^\alpha\cdot \partial^\alpha
\end{equation}
with a unique real sequence $c = (c_\alpha)_{\alpha\in\nset_0^n}$.
\end{enumerate}
If one of the equivalent statements (i) or (ii) holds, then the diagonal sequence $t=(t_\alpha)_{\alpha\in\nset_0^n}$ and the sequence $c=(c_\alpha)_{\alpha\in\nset_0^n}$ of coefficients $c_\alpha$ fulfill the relations
\begin{equation*}\label{eq:talphacalphaRelations}
t_\alpha = \sum_{\beta\in\nset_0^n:\ \beta\preceq\alpha} \binom{\alpha}{\beta}\cdot c_\beta \qquad\text{and}\qquad c_\alpha = \sum_{\beta\in\nset_0^n:\ \beta\preceq\alpha} (-1)^{|\alpha-\beta|}\cdot \binom{\alpha}{\beta}\cdot t_\beta
\end{equation*}
for all $\alpha\in\nset_0^n$.
\end{cor}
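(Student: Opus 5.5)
The plan is to use the uniqueness in \Cref{thm:canRepr} and the identity
\[x^\beta\cdot\partial^\beta x^\alpha = \frac{\alpha!}{(\alpha-\beta)!}\cdot x^\alpha \quad\text{for}\ \beta\preceq\alpha,\qquad x^\beta\cdot\partial^\beta x^\alpha = 0 \quad\text{otherwise}.\]
From it we get, for every real sequence $c$, that the operator in (\ref{eq:diagRepr1}) is well defined. Applied to $x^\alpha$, only the finitely many $\beta\preceq\alpha$ contribute, and
\[\sum_{\beta\preceq\alpha}\frac{c_\beta}{\beta!}\cdot x^\beta\partial^\beta x^\alpha = \sum_{\beta\preceq\alpha}\binom{\alpha}{\beta} c_\beta\cdot x^\alpha.\]

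\textbf{(ii)$\Rightarrow$(i).} By the identity above, every operator of the form (\ref{eq:diagRepr1}) maps $x^\alpha$ to $t_\alpha x^\alpha$ with $t_\alpha=\sum_{\beta\preceq\alpha}\binom{\alpha}{\beta}c_\beta$. So it is diagonal, and the first relation holds.

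\textbf{(i)$\Rightarrow$(ii).} Let $T$ be diagonal with diagonal sequence $t$. Define $c$ recursively over the finite lower sets $\{\beta\preceq\alpha\}$ by
\[c_\alpha := t_\alpha - \sum_{\beta\prec\alpha,\ \beta\neq\alpha}\binom{\alpha}{\beta}c_\beta.\]
This is possible by induction on $|\alpha|$, since the coefficient of $c_\alpha$ in the relation is $\binom{\alpha}{\alpha}=1$. The operator $S$ built from this $c$ via (\ref{eq:diagRepr1}) then satisfies $Sx^\alpha=t_\alpha x^\alpha=Tx^\alpha$ for all $\alpha$, so $S=T$ by linearity.

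\textbf{Uniqueness of $c$.} This follows from the uniqueness of the $q_\alpha$ in \Cref{thm:canRepr}, since the representation (\ref{eq:diagRepr1}) is a canonical representation with $q_\alpha=\frac{c_\alpha}{\alpha!}x^\alpha$. Alternatively, the triangular system $t=Bc$ with $B_{\alpha\beta}=\binom{\alpha}{\beta}$ has unit diagonal and is therefore uniquely solvable.

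\textbf{Inversion formula.} This is the only step needing real work. I would invert the product-form binomial transform coordinatewise. Substituting $t_\beta=\sum_{\gamma\preceq\beta}\binom{\beta}{\gamma}c_\gamma$ into $\sum_{\beta\preceq\alpha}(-1)^{|\alpha-\beta|}\binom{\alpha}{\beta}t_\beta$ and using $\binom{\alpha}{\beta}\binom{\beta}{\gamma}=\binom{\alpha}{\gamma}\binom{\alpha-\gamma}{\beta-\gamma}$ gives
\[\sum_{\gamma\preceq\alpha}\binom{\alpha}{\gamma}c_\gamma\sum_{\delta\preceq\alpha-\gamma}(-1)^{|\alpha-\gamma-\delta|}\binom{\alpha-\gamma}{\delta}.\]
The inner sum equals $\prod_i(1-1)^{\alpha_i-\gamma_i}$. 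It vanishes unless $\gamma=\alpha$, which yields exactly $c_\alpha$.

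The main obstacle is mostly bookkeeping with the multi-index partial order. The well-definedness of the infinite sum, i.e. that only finitely many terms act on each monomial, should be noted explicitly.
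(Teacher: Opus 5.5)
Your proposal is correct and takes essentially the paper's route. The paper obtains the corollary by inserting $Tx^\alpha = t_\alpha x^\alpha$ into the canonical representation of \Cref{thm:canRepr}, which amounts to your unit-triangular recursion producing $q_\alpha = \frac{c_\alpha}{\alpha!}x^\alpha$ plus uniqueness of the $q_\alpha$; your verification of the inversion formula via $\prod_i (1-1)^{\alpha_i-\gamma_i}$ supplies a step the paper states without proof.
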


\section{$K$-Positivity Preservers}
\label{sec:KposPres}

We now come to our main topic: \emph{$K$-positivity preserver}.

\begin{dfn}
Let $n\in\nset$, let $K\subseteq\rset^n$ be closed, and let
\[T:\rset[x_1,\dots,x_n]\to\rset[x_1,\dots,x_n]\]
be linear.
If
\[T\pos(K)\subseteq\pos(K),\]
then we call $T$ a \emph{$K$-positivity preserver}.
If $K=\rset^n$, then, for short, we call $T$ a \emph{positivity preserver}.
\end{dfn}

If $K$ is empty ($K=\emptyset$), then
\[\pos(\emptyset) = \rset[x_1,\dots,x_n]\]
and hence every linear map
\[T:\rset[x_1,\dots,x_n]\to\rset[x_1,\dots,x_n]\]
is a $\emptyset$-positivity preserver.
$K=\emptyset$ is trivial and we can therefore assume $K\neq\emptyset$.

\begin{exm}\label{exm:KposPres}
Let $n,k\in\nset$, let $K\subseteq\rset^n$ be closed and non-empty, let $p_1,\dots,p_k\in\pos(K)$, and let 
\[l_1,\dots,l_k:\rset[x_1,\dots,x_n]\to\rset\]
be $K$-moment functionals.
Then
\[T:\rset[x_1,\dots,x_n]\to\rset[x_1,\dots,x_n],\quad f\mapsto Tf := \sum_{i=1}^k l_i(f)\cdot p_i\]
is a $K$-positivity preserver.
\exmsymbol
\end{exm}

Not every $K$-positivity preserver is of the form given in \Cref{exm:KposPres}.

\begin{exm}[{\cite[Rem.\ 2.3]{didio25hadamardLanger}}]\label{exm:nonFiniteSumRepr}
Let $n\in\nset$ and
\[\id:\rset[x_1,\dots,x_n]\to\rset[x_1,\dots,x_n], \quad f\mapsto f\]
be the identity on $\rset[x_1,\dots,x_n]$.
Clearly, $\id$ is a $K$-positivity preserver for any (non-empty) $K\subseteq\rset^n$.
Let $K=\rset^n$.
Then $\id$ does not possess a representation as in \Cref{exm:KposPres}.
To see this, assume that $\id$ has such a representation, i.e., there exist a $k\in\nset$, polynomials $p_1,\dots,p_k\in\pos(K)$, and $\rset^n$-moment functionals
\[l_1,\dots,l_k:\rset[x_1,\dots,x_n]\to\rset\]
such that
\[\id = \sum_{i=1}^k l_i\cdot p_i.\]
But since $\id(x_1+a)^2 = (x_1+a)^2$, $(x_1+a)^2$ is an extreme point of $\pos(\rset^n)$, and since $l_i((x_i+a)^2)\geq 0$ for any $a\in\rset$, we have that $(x_1+a)^2$ must be contained in $\{p_1,\dots,p_k\}$.
But since $\{p_1,\dots,p_k\}$ is finite (countable) and $\rset$ is uncountable, we arrived at a contradiction, i.e., $\id$ is not of a form as in \Cref{exm:KposPres}.
\exmsymbol
\end{exm}

A much simpler argument as in \Cref{exm:nonFiniteSumRepr} and therefore also simpler as in \cite{didio25hadamardLanger} is, that any $K$-positivity preserver as in \Cref{exm:KposPres} has only finite dimensional image.
Hence, any $K$-positivity preserver with infinite dimensional image can not be represented as in \Cref{exm:KposPres}, since all $l_i\geq 0$ on $\pos(K)$.

Before we give a characterization of $K$-positivity preservers, we need the following definition.

\begin{dfn}\label{dfn:Ty}
Let $q_\alpha\in\rset[x_1,\dots,x_n]$ for all $\alpha\in\nset_0^n$ and let
\[T = \sum_{\alpha\in\nset_0^n} q_\alpha\cdot\partial^\alpha\]
be a linear operator.
For $y\in\rset^n$, we define the linear map
\begin{equation}\label{eq:Ty}
T_y:\rset[x_1,\dots,x_n]\to\rset[x_1,\dots,x_n]
\quad\text{by}\quad
T_y := \sum_{\alpha\in\nset_0^n} q_\alpha(y)\cdot\partial^\alpha.
\end{equation}
\end{dfn}

We can now give the following characterization of $K$-positivity preserver.

\begin{thm}[{\cite[Main Thm.\ 3.5]{didio25KPosPresGen}}]\label{thm:kPosPresCara}
Let $n\in\nset$, $K\subseteq\rset^n$ be closed, and
\[T:\rset[x_1,\dots,x_n]\to\rset[x_1,\dots,x_n] \quad\text{with}\quad T = \sum_{\alpha\in\nset_0^n} q_\alpha\cdot\partial^\alpha\]
be linear, $q_\alpha\in\rset[x_1,\dots,x_n]$ for all $\alpha\in\nset_0^n$.
Then the following are equivalent:
\begin{enumerate}[(i)]
\item $T$ is a $K$-positivity preserver.

\item For all $y\in K$, the sequence $(\alpha!\cdot q_\alpha(y))_{\alpha\in\nset_0^n}$ is a $(K-y)$-moment sequence.
\end{enumerate}
If one of the equivalent conditions (i) or (ii) holds, then, for any $y\in K$,
\[(T_y f)(y) = \int f(x+y)~\diff\mu_y(x)\]
for all $f\in\rset[x_1,\dots,x_n]$, where $\mu_y$ is a representing measure of the $(K-y)$-moment sequence $(\alpha!\cdot q_\alpha(y))_{\alpha\in\nset_0^n}$.
\end{thm}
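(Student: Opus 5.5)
The plan is to reduce both directions to the single identity
\[(Tf)(y) = \sum_{\alpha\in\nset_0^n} q_\alpha(y)\cdot(\partial^\alpha f)(y) = (T_y f)(y),\]
valid for every $y\in\rset^n$ and $f\in\rset[x_1,\dots,x_n]$. For fixed $y$ I would define the linear functional
\[L_y:\rset[x_1,\dots,x_n]\to\rset,\quad g\mapsto (Tg(\,\cdot+y))(y),\]
i.e.\ $L_y(g)$ is $T$ applied to the translate $f(x)=g(x-y)$ and evaluated at $y$. Taylor expansion at $0$ gives $(\partial^\alpha f)(y) = (\partial^\alpha g)(0) = \alpha!\, g_\alpha$, where $g=\sum_\alpha g_\alpha x^\alpha$. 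Hence
\[L_y(g) = \sum_{\alpha} \alpha!\, q_\alpha(y)\, g_\alpha,\qquad L_y(x^\alpha) = \alpha!\cdot q_\alpha(y).\]
So $(\alpha!\, q_\alpha(y))_\alpha$ is exactly the moment sequence of the functional $L_y$, and (ii) says precisely that every $L_y$, $y\in K$, is a $(K-y)$-moment functional.

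\textbf{(ii)$\Rightarrow$(i).} Let $f\in\pos(K)$ and $y\in K$, and let $\mu_y$ be a representing measure with $\supp\mu_y\subseteq K-y$. Then
\[(Tf)(y) = L_y\big(f(\,\cdot+y)\big) = \int f(x+y)~\diff\mu_y(x)\geq 0,\]
since $x+y\in K$ for $x\in\supp\mu_y$. Hence $Tf\in\pos(K)$. The same computation yields the stated integral formula $(T_yf)(y)=\int f(x+y)\,\diff\mu_y(x)$ for any representing measure. The case $K=\emptyset$ is vacuous in both directions.

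\textbf{(i)$\Rightarrow$(ii).} Fix $y\in K$. If $g\in\pos(K-y)$, then $f=g(\,\cdot-y)\in\pos(K)$, so $Tf\in\pos(K)$ and in particular $L_y(g)=(Tf)(y)\geq 0$. Thus $L_y$ is nonnegative on $\pos(K-y)$, and $K-y$ is closed. Haviland's theorem then says that $L_y$ is a $(K-y)$-moment functional, i.e.\ $(\alpha!\, q_\alpha(y))_\alpha$ is a $(K-y)$-moment sequence.

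The main obstacle is this last step. Haviland's theorem is not stated in the paper and must be imported. The only other care needed is bookkeeping: checking that the translation and the factor $\alpha!$ match the normalization in (ii). The canonical representation of \Cref{thm:canRepr} guarantees that the sum $\sum_\alpha q_\alpha\partial^\alpha f$ is finite for each polynomial $f$, so all manipulations above are finite and legitimate.
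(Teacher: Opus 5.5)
Your proof is correct and is essentially the argument behind the cited Main Theorem 3.5, which the review only quotes without proof. You translate by $y$ so that $f\mapsto (Tf)(y)=(T_yf)(y)$ becomes the functional with moments $\alpha!\,q_\alpha(y)$, apply Haviland's theorem on the closed set $K-y$ for (i)$\Rightarrow$(ii), and integrate directly for the converse. One slip: the displayed definition should read $L_y(g)=\big(T[g(\,\cdot-y)]\big)(y)$, not $g(\,\cdot+y)$; your ``i.e.'' and all later uses already assume the correct sign.
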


As mentioned before, if $K=\emptyset$, then any linear map on $\rset[x_1,\dots,x_n]$ is a $K$-positivity preserver, i.e., (i) in \Cref{thm:kPosPresCara} is always fulfilled, and (ii) in \Cref{thm:kPosPresCara} is the empty condition, i.e., it is also always fulfilled.

From \Cref{thm:kPosPresCara} with $K=\rset^n$ we immediately gain the classical characterization of $\rset^n$-positivity preserver of Julius Borcea.

\begin{cor}[{\cite[Thm.\ 3.1]{borcea11}}]\label{cor:borcea}
Let
\[T = \sum_{\alpha\in\nset_0^n} q_\alpha\cdot\partial^\alpha\]
be with $q_\alpha\in\rset[x_1,\dots,x_n]$.
Then the following are equivalent:
\begin{enumerate}[(i)]
\item $T$ is a $\rset^n$-positivity preserver.

\item For every $y\in\rset^n$, $(\alpha!\cdot q_\alpha(y))_{\alpha\in\nset_0^n}$ is a $\rset^n$-moment sequence.
\end{enumerate}
If one of the equivalent conditions holds, then for every $y\in\rset^n$ the map $T_y$ is given by
\[(T_y p)(x) := \int_{\rset^n} p(x+z)~\diff\mu_y(z)\]
for $p\in\rset[x_1,\dots,x_n]$, where $\mu_y$ is a representing measure of $(\alpha!\cdot q_\alpha(y))_{\alpha\in\nset_0^n}$.
\end{cor}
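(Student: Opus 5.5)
This is the special case $K=\rset^n$ of \Cref{thm:kPosPresCara}, and I will derive it by direct specialization. Set $K=\rset^n$, which is closed and non-empty. Then condition (i) of \Cref{thm:kPosPresCara} reads $T\pos(\rset^n)\subseteq\pos(\rset^n)$, which is exactly (i) here. For every $y\in\rset^n$ we have $K-y=\rset^n-y=\rset^n$. Hence condition (ii) of \Cref{thm:kPosPresCara}, namely that $(\alpha!\cdot q_\alpha(y))_{\alpha\in\nset_0^n}$ is a $(K-y)$-moment sequence for all $y\in K$, becomes: for all $y\in\rset^n$ it is an $\rset^n$-moment sequence. This is (ii) here, so the equivalence follows at once.

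For the integral formula, the only point needing care is that \Cref{thm:kPosPresCara} gives it only at the point $x=y$, namely $(T_yf)(y)=\int f(z+y)\,\diff\mu_y(z)$. The corollary instead asserts $(T_yp)(x)=\int p(x+z)\,\diff\mu_y(z)$ for all $x$. I would prove this directly from the definition of $T_y$ in (\ref{eq:Ty}), which has constant coefficients.

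Fix $y$ and let $\mu_y$ represent $(\alpha!\,q_\alpha(y))_\alpha$. For a polynomial $p$ and a fixed $x$, Taylor's formula, which is a finite sum, gives
\[p(x+z)=\sum_\alpha \frac{\partial^\alpha p(x)}{\alpha!}\, z^\alpha.\]
Integrating term by term against $\mu_y$ is legitimate because the sum is finite and all moments exist. This yields
\[\int p(x+z)\,\diff\mu_y(z)=\sum_\alpha \frac{\partial^\alpha p(x)}{\alpha!}\,\alpha!\,q_\alpha(y)=\sum_\alpha q_\alpha(y)\,\partial^\alpha p(x)=(T_yp)(x).\]
Note that only finitely many terms are nonzero, since $\partial^\alpha p=0$ for $|\alpha|>\deg p$. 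Setting $x=y$ recovers the formula in \Cref{thm:kPosPresCara}, which confirms consistency.

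There is no real obstacle here. The only subtlety is the one already handled: extending the pointwise identity at $x=y$ to all $x$, which is done by the Taylor expansion argument above.
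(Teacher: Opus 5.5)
Your proof is correct and follows the paper's route: the paper obtains this corollary immediately by specializing \Cref{thm:kPosPresCara} to $K=\rset^n$, so that $K-y=\rset^n$. Your Taylor-expansion step, which gives $(T_yp)(x)=\int p(x+z)\,\diff\mu_y(z)$ for all $x$ and not only at $x=y$, correctly fills in a detail the paper leaves implicit.
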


\section{Generators $A$ of Linear Maps $T$: $T=e^{A}$}
\label{sec:gen}

Among the linear maps
\[T:\rset[x_1,\dots,x_n]\to\rset[x_1,\dots,x_n]\]
a special class are the maps of the form
\[T = e^{A}\]
for a linear map
\[A:\rset[x_1,\dots,x_n]\to\rset[x_1,\dots,x_n].\]
Not every $T$ can be written as $T = e^A$ and not every $A$ gives a well-defined $e^A$.
Hence, at first we have to determine, which $A$ give well-defined $e^A$.

\begin{dfn}\label{dfn:fg}
Let $n\in\nset$.
We define
\begin{multline*}
\fg := \big\{A:\rset[x_1,\dots,x_n]\to\rset[x_1,\dots,x_n] \,\big|\, A\ \text{linear and}\\\ e^{tA}:\rset[x_1,\dots,x_n]\to\rset[x_1,\dots,x_n]\ 
\text{well-defined for all}\ t\in\rset\big\}.
\end{multline*}
\end{dfn}

While $\fg$ is an abstract set, the following gives a characterization.

\begin{thm}[{\cite[Thm.\ 4.4]{didio25PosToSoS}}]\label{thm:fgCaracterization}
Let $n\in\nset$ and
\[A:\rset[x_1,\dots,x_n]\to\rset[x_1,\dots,x_n]\]
be linear.
The following are equivalent:
\begin{enumerate}[(i)]
\item $A\in\fg$.

\item $\displaystyle \sup_{k\in\nset_0} \deg A^k x^\alpha < \infty$ for all $\alpha\in\nset_0^n$.

\item $\displaystyle \sup_{k\in\nset_0} \deg A^k p < \infty$ for all $p\in\rset[x_1,\dots,x_n]$.

\item For all $i\in\nset_0$, there exist subvector spaces $V_i\subseteq\rset[x_1,\dots,x_n]$ with
\begin{enumerate}[(a)]
\item $\dim V_i < \infty$ for all $i\in\nset_0$,

\item $\displaystyle \bigcup_{i\in\nset_0} V_i = \rset[x_1,\dots,x_n]$, and

\item $AV_i \subseteq V_i$ for all $i\in\nset_0$.
\end{enumerate}
\end{enumerate}
\end{thm}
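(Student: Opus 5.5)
The plan is to prove the cycle (ii)$\Rightarrow$(iii)$\Rightarrow$(iv)$\Rightarrow$(i)$\Rightarrow$(ii), with $e^{tA}p$ understood as $\sum_{k\in\nset_0}\frac{t^k}{k!}A^kp$ converging in the LF-topology of $\rset[x_1,\dots,x_n]$. By the description of convergence in Section~\ref{sec:prelim}, this means the partial sums have uniformly bounded degree and converge coefficientwise.

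For (ii)$\Rightarrow$(iii), write $p=\sum_\alpha p_\alpha x^\alpha$ as a finite sum. By linearity, $\deg A^kp\le\max_{p_\alpha\neq0}\deg A^kx^\alpha$. Taking the supremum over $k$ and using finiteness gives the claim.

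For (iii)$\Rightarrow$(iv), define $V_i := \mathrm{span}\{A^kx^\alpha : k\in\nset_0,\ |\alpha|\le i\}$. By (iii), every $A^kx^\alpha$ with $|\alpha|\le i$ lies in $\rset[x_1,\dots,x_n]_{\le D_i}$ with $D_i:=\max_{|\alpha|\le i}\sup_k\deg A^kx^\alpha<\infty$. Hence $\dim V_i\le\dim\rset[x_1,\dots,x_n]_{\le D_i}<\infty$, which is (a). Property (c) holds by construction. Property (b) holds since $x^\alpha\in V_{|\alpha|}$.

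For (iv)$\Rightarrow$(i), given $p$, choose $i$ with $p\in V_i$; this is possible since the $V_i$ cover the space. As $AV_i\subseteq V_i$ and $V_i$ is finite dimensional, $A|_{V_i}$ is a matrix. The matrix exponential $\sum_k \frac{t^k}{k!}(A|_{V_i})^kp$ converges in $V_i$ for every $t\in\rset$. All partial sums lie in $V_i$, so their degrees are bounded by $\max\deg$ of a basis of $V_i$. Coefficientwise convergence follows from norm convergence in the finite-dimensional space $V_i$. Hence $e^{tA}p$ is well defined and independent of the choice of $i$, since the limit is the same series. Linearity of $e^{tA}$ is then clear.

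The main obstacle is (i)$\Rightarrow$(ii), because one must extract degree control from mere convergence. Suppose $\sup_k\deg A^kx^\alpha=\infty$ for some $\alpha$. Then the partial sums $S_N(t)=\sum_{k\le N}\frac{t^k}{k!}A^kx^\alpha$ must have bounded degree for each $t$, say at most $D(t)$.

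The first idea is to use $t=1$ alone. If $\deg S_N(1)\le D$ for all $N$, then $A^Nx^\alpha/N! = S_N(1)-S_{N-1}(1)$ has degree at most $D$ for every $N$. This contradicts unboundedness. So already well-definedness at a single $t\neq0$ suffices, and I would present it that way.

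The only care needed is to fix precisely what ``well-defined'' means, namely convergence of the exponential series in the LF-space $\rset[x_1,\dots,x_n]$. The key input is the stated characterization that LF-convergence forces $\sup_N\deg S_N<\infty$.
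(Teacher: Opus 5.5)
Your proof is correct, and the cycle (ii)$\Rightarrow$(iii)$\Rightarrow$(iv)$\Rightarrow$(i)$\Rightarrow$(ii) with $V_i=\mathrm{span}\{A^kx^\alpha : k\in\nset_0,\ |\alpha|\le i\}$ is the natural argument. The paper gives no proof of this result; it defers to the cited work. However, it uses the theorem in Proposition~\ref{prop:eigenvalueVector} and the corollary after it by treating $A|_{V_i}$ as a matrix on a finite-dimensional invariant subspace, which is exactly your (iv)$\Rightarrow$(i) mechanism.

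There are two small points. For (b), state explicitly that $V_i\subseteq V_{i+1}$. A union of subspaces that merely contains every monomial need not be all of $\rset[x_1,\dots,x_n]$, but with nestedness every $p$ of degree $d$ lies in $V_d$.

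For (i)$\Rightarrow$(ii), your telescoping argument at the single value $t=1$ relies on the degree bound $\sup_N\deg S_N(1)<\infty$ that LF-convergence provides. That is the notion of convergence fixed in Section~\ref{sec:prelim}, so the argument is valid. It also shows that the quantifier ``for all $t\in\rset$'' in Definition~\ref{dfn:fg} is redundant. Under a weaker reading, namely coefficientwise convergence to some polynomial, that step would fail, and one would genuinely need all $t$. In that case, the $x^\beta$-coefficient of $e^{tA}x^\alpha$ is an entire function of $t$, and some degree bound $D$ holds for uncountably many $t$. The identity theorem then forces every $x^\beta$-coefficient with $|\beta|>D$ of every $A^kx^\alpha$ to vanish.
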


An immediate consequence of \Cref{thm:fgCaracterization} is the following.

\begin{prop}\label{prop:eigenvalueVector}
Let $n\in\nset$ and $A\in\fg$.
Then $A$ has a complex eigenvalue.
\end{prop}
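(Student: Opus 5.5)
We derive this directly from \Cref{thm:fgCaracterization}, specifically the implication (i)$\Rightarrow$(iv). Since $A\in\fg$, there are finite-dimensional subspaces $V_i\subseteq\rset[x_1,\dots,x_n]$ with $AV_i\subseteq V_i$ and $\bigcup_{i\in\nset_0} V_i = \rset[x_1,\dots,x_n]$. Because the union is the whole (nonzero) polynomial ring, at least one $V_{i_0}$ must be nonzero; for instance, pick $i_0$ with $1\in V_{i_0}$. Then $V_{i_0}$ is a nonzero finite-dimensional real vector space that is invariant under $A$. So the restriction $B:=A|_{V_{i_0}}:V_{i_0}\to V_{i_0}$ is a linear endomorphism of a space of dimension $m:=\dim V_{i_0}\in\nset$.

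Next we pass to linear algebra. We choose a basis of $V_{i_0}$ and represent $B$ by a real matrix $M\in\rset^{m\times m}$. Its characteristic polynomial $\chi_M(\lambda)=\det(\lambda I-M)$ has degree $m\geq 1$. By the fundamental theorem of algebra it has a root $\lambda\in\cset$, and $\lambda$ is an eigenvalue of $M$ with eigenvector $v\in\cset^m\setminus\{0\}$.

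Finally, we translate this back to polynomials. We extend $A$ complex-linearly to $\cset[x_1,\dots,x_n]=\rset[x_1,\dots,x_n]\otimes\cset$, i.e., $A(p+iq):=Ap+iAq$. Under this extension, $V_{i_0}\otimes\cset$ is $A$-invariant and $A$ is represented there by the same matrix $M$. The vector $v$ then corresponds to a nonzero polynomial $f\in\cset[x_1,\dots,x_n]$ with $Af=\lambda f$, so $\lambda$ is a complex eigenvalue of $A$.

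There is no real obstacle here. The only step needing care is the interpretation of ``complex eigenvalue'' for an operator on a real space, which we handle by complexification as above. If $\lambda$ happens to be real, we can take $\mathrm{Re}\, f$ or $\mathrm{Im}\, f$, whichever is nonzero, to obtain a real eigenvector. If $\lambda$ is non-real, the $2$-dimensional real invariant subspace spanned by $\mathrm{Re}\, f$ and $\mathrm{Im}\, f$ carries the pair $\lambda,\bar\lambda$.
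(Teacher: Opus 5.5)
Your proposal is correct and follows the paper's argument: apply (i)$\Rightarrow$(iv) of \Cref{thm:fgCaracterization} to obtain a finite-dimensional $A$-invariant subspace, then take an eigenvalue of the restricted operator (a matrix). You are slightly more careful than the paper, which neither ensures the invariant subspace is nonzero (your choice of $V_{i_0}$ with $1\in V_{i_0}$ handles this) nor spells out the complexification.
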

\begin{proof}
By \Cref{thm:fgCaracterization}, there exists a $V\subseteq\rset[x_1,\dots,x_n]$ with
\[\dim V < \infty \quad\text{and}\quad AV\subseteq V,\]
i.e., $A|_V:V\to V$ is a linear operator on a finite dimensional vector space (matrix) and hence has at least one (complex) eigenvalue.
\end{proof}

Since by \ref{thm:fgCaracterization} it is sufficient to know every $A\in\fg$ only on finite dimensional invariant subspaces $V_i$, we have the following.

\begin{cor}[{\cite[Cor.\ 5.6]{didio25KPosPresGen}} and {\cite[Cor.\ 4.5]{didio25PosToSoS}}]
Let $A\in\fg$.
Then
\[\exp A = \sum_{k\in\nset_0^n} \frac{A^k}{k!} = \lim_{k\to\infty} \left( \one + \frac{A}{k} \right)^k = \lim_{k\to\infty} \left( \one - \frac{A}{k} \right)^{-k}.\]
\end{cor}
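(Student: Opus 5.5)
The plan is to reduce everything to finite dimensions using \Cref{thm:fgCaracterization}, and then pass to the limit pointwise on each polynomial. Since $A\in\fg$, (iv) of \Cref{thm:fgCaracterization} gives finite dimensional subspaces $V_i$ with $AV_i\subseteq V_i$ and $\bigcup_i V_i=\rset[x_1,\dots,x_n]$. Fix $p\in\rset[x_1,\dots,x_n]$ and choose $i$ with $p\in V_i$. Then every operator in the statement, namely $A^k$, $(\one+A/k)^k$ and $(\one-A/k)^{-k}$, maps $V_i$ into itself, and all three expressions can be evaluated on $p$ inside the finite dimensional space $V_i$. Choosing a basis of $V_i$, the restriction $A|_{V_i}$ becomes a real matrix $M$. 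The statement, applied to $p$, is then exactly the classical matrix identity
\[
e^M=\sum_{k\in\nset_0}\frac{M^k}{k!}=\lim_{k\to\infty}\Big(\one+\frac{M}{k}\Big)^k=\lim_{k\to\infty}\Big(\one-\frac{M}{k}\Big)^{-k}.
\]
Here the sum should of course run over $k\in\nset_0$.

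Convergence should be understood in the LF-topology described in the preliminaries. This requires bounded degree together with coefficientwise convergence. Both hold automatically, because every term lies in $V_i$, a finite dimensional space whose polynomials have bounded degree, and on $V_i$ all norms are equivalent. Well-definedness also needs checking: the value obtained for $p$ must not depend on the choice of $i$. If $p\in V_i\cap V_j$, each expression applied to $p$ is a limit of the same polynomials $A^kp$ or $(\one\pm A/k)^{\pm k}p$, and these do not depend on the subspace. So the limits agree, which can also be seen by passing to the invariant finite dimensional space $V_i+V_j$.

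For the matrix identities, I would use the standard argument. Via the Jordan form, or via the estimate
\[
\Big\|e^{M/k}-\Big(\one+\frac{M}{k}\Big)\Big\|=O(k^{-2}),
\]
together with the telescoping bound $\|X^k-Y^k\|\le k\max(\|X\|,\|Y\|)^{k-1}\|X-Y\|$, one gets $(\one+M/k)^k\to e^M$.

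The only point needing care is the third expression, where $(\one-A/k)$ must be invertible on all of $\rset[x_1,\dots,x_n]$. I expect this to be the main obstacle. The plan is as follows. By \Cref{prop:eigenvalueVector} and its proof, the eigenvalues of $A$ on each $V_i$ are finitely many, but over all $i$ they could accumulate, so $1/k$ times an eigenvalue could equal $1$ for some $k$. I would therefore only claim that $(\one-A/k)^{-1}$ exists on $V_i$ for $k>\|A|_{V_i}\|$, via the Neumann series. This is enough: for each fixed $p$ the sequence is eventually defined, and the limit is taken pointwise. Then $(\one-M/k)^{-k}=\big((\one-M/k)^{k}\big)^{-1}\to (e^{-M})^{-1}=e^M$, using the previous step with $-M$ and continuity of inversion.
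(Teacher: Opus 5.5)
Your proposal is correct and follows the same route as the paper, which justifies the corollary only by the remark that, by \Cref{thm:fgCaracterization}~(iv), every $A\in\fg$ can be handled on the finite dimensional invariant subspaces $V_i$, where the three identities are the classical matrix identities. Your extra care with the last expression goes beyond the paper and is warranted. For example, for $A=x_1\partial_1\in\fg$ one has $(\one-A/k)x_1^k=0$ for every $k\in\nset$, so $(\one-A/k)^{-k}$ is never defined on all of $\rset[x_1,\dots,x_n]$, and the identity only makes sense in your pointwise form, with the inverse taken on $V_i$ for $k>\|A|_{V_i}\|$.
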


\section{Generators $A$ of $K$-Positivity Preserving Semi-Groups $(e^{tA})_{t\geq 0}$}
\label{sec:posGen}

While we have characterized all linear maps $A$ on $\rset[x_1,\dots,x_n]$ such that $e^A$ is well-defined, we now want to know, which of these $A$ are generators of a $K$-positivity preserving semi-group $(e^{tA})_{t\geq 0}$.

\begin{dfn}
Let $n\in\nset$, let $K\subseteq\rset^n$ be closed, and let $A\in\fg$.
We say $A$ is a \emph{generator of a $K$-positivity preserving semi-group} $(e^{tA})_{t\geq 0}$, if $e^{tA}$ is a $K$-positivity preserver for all $t\geq 0$.
\end{dfn}

We start with the simpler case of constant coefficients.

\begin{dfn}
Let $n\in\nset$.
We define
\[\fD_c := \left\{\sum_{\alpha\in\nset_0^n} q_\alpha\cdot \partial^\alpha \,\middle|\, q_\alpha\in\rset,\ q_0 = 1\right\}\]
and
\[\fd_c := \left\{ \sum_{\alpha\in\nset_0^n\setminus\{0\}} d_\alpha\cdot\partial^\alpha \,\middle|\, d_\alpha\in\rset\ \text{for all}\ \alpha\in\nset_0^n\setminus\{0\}\right\}.\]
\end{dfn}

\begin{thm}[{\cite[Thm.\ 2.17]{didio24posPresConst}}]\label{thm:mainFrechetLieGroups}
Let $n\in\nset$.
Then $(\fD_c,\,\cdot\,)$ as a Fr\'echet space is a commutative regular Fr\'echet Lie group with the commutative Fr\'echet Lie algebra $(\fd_c,\,\cdot\,,+)$.
The exponential map
\[\exp:\fd_c\to\fD_c,\quad A\mapsto \sum_{k\in\nset_0} \frac{A^k}{k!}\]
is smooth and bijective with the smooth and bijective inverse
\[\log:\fD_c\to\fd_c,\quad A\mapsto -\sum_{k\in\nset} \frac{(\one-A)^k}{k}.\]
\end{thm}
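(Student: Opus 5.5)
The plan is to identify $\fD_c$ and $\fd_c$ with formal power series via the symbol map. This turns the whole statement into a statement about the algebra $\rset[[y_1,\dots,y_n]]$, where the Fr\'echet structure is the one recalled in the preliminaries.

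\textbf{Step 1: the symbol map.} Send $D=\sum_\alpha q_\alpha\partial^\alpha$ to the formal power series $\sigma(D)=\sum_\alpha q_\alpha y^\alpha\in\rset[[y_1,\dots,y_n]]$. On a polynomial $p$, only finitely many terms of $D$ act nontrivially, so $D$ is a well-defined operator. Composition of constant-coefficient operators corresponds to multiplication of formal power series, because $\partial^\alpha\partial^\beta=\partial^{\alpha+\beta}$ and $\partial^\alpha$ kills every polynomial of degree less than $|\alpha|$. Hence $\sigma$ is an isomorphism of commutative algebras from the constant-coefficient operators onto $\rset[[y]]$. Under $\sigma$:
\begin{itemize}
\item $\fD_c$ becomes the affine subspace $1+\mathfrak m$, where $\mathfrak m$ is the ideal of series with zero constant term;
\item $\fd_c$ becomes $\mathfrak m$ itself.
\end{itemize}
We give both the Fr\'echet topology of $\rset^{\nset_0^n}$ from the example in the preliminaries. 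Then $\fD_c$ is a closed affine subspace, hence a Fr\'echet manifold with a single global chart, and $\fd_c$ is a closed subspace, hence Fr\'echet.

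\textbf{Step 2: group structure and smoothness.} Every element $1+m$ with $m\in\mathfrak m$ is invertible, with inverse $\sum_k(-m)^k$. This series converges in the Fr\'echet topology because $m^k$ has order at least $k$, so for each $d$ only the terms with $k\le d$ affect the coefficients of degree $\le d$. Commutativity is inherited from $\rset[[y]]$.

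For smoothness, fix $d$. The coefficients of degree $\le d$ of $A\cdot B^{-1}$ are polynomial functions of finitely many coefficients of $A$ and $B$, namely those of degree $\le d$. Maps with this property are smooth between Fr\'echet spaces, since all their derivatives can be computed coordinatewise with the same truncation. This gives (i)--(iii) of \Cref{dfn:frechetLieGroup}.

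\textbf{Step 3: tangent space and Lie algebra.} The tangent space at $1$ is $\mathfrak m\cong\fd_c$, which gives (iv). The Lie bracket vanishes by commutativity.

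\textbf{Step 4: $\exp$ and $\log$.} Both are defined by power series in $m\in\mathfrak m$, respectively in $1-A\in\mathfrak m$, and both converge for the same reason as the inverse in Step 2: in each truncation degree $d$ they are finite polynomial expressions in the coefficients. This makes them well-defined and smooth. The identities $\log\exp=\id$ and $\exp\log=\id$ hold coefficientwise, because they are the formal one-variable identities $\log(e^z)=z$ and $e^{\log(1+w)}=1+w$ in $\rset[[z]]$, substituted with $z=m$, respectively $w=A-1$. Such a substitution is a continuous algebra homomorphism $\rset[[z]]\to\rset[[y]]$ whenever the substituted element lies in $\mathfrak m$. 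Bijectivity follows.

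Condition (v) is immediate: $\exp(tu)=1+tu+O(t^2)$ coefficientwise, so $\left.\frac{\diff}{\diff t}\exp(tu)\right|_{t=0}=u$.

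\textbf{Step 5: regularity.} The expected main obstacle is condition (vi), i.e.\ regularity in Omori's sense. The approach is to show that for every smooth curve $u(t)$ in $\fd_c$, the evolution equation $\gamma'=\gamma\cdot u$ with $\gamma(0)=1$ has a unique solution depending smoothly on $u$. Commutativity makes this explicit: the solution is $\gamma(t)=\exp\int_0^t u(s)\,\diff s$, where the integral is taken coefficientwise. Smooth dependence on $u$ then follows from Step 4 together with the coefficientwise truncation argument. Identifying $C^1(G,\fg)$ with the $C^1$-curves in the Fr\'echet topology reduces, via the global chart from Step 1, to the statement that the chart is a diffeomorphism, which holds because it is affine.
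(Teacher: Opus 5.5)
Your proposal is correct and is essentially the argument behind the cited result, which this review does not reproduce. You identify $\fD_c$ with the closed affine subspace $1+\mathfrak m\subseteq\rset[[y_1,\dots,y_n]]$, and you get smoothness of $(A,B)\mapsto A\cdot B^{-1}$, $\exp$ and $\log$ from the fact that each coefficient is a polynomial in finitely many coefficients. Bijectivity of $\exp$ comes from substituting into the formal identities $\log e^{z}=z$ and $e^{\log(1+w)}=1+w$, and regularity from the explicit solution $\gamma(t)=\exp\int_0^t u(s)\,\diff s$ made available by commutativity. That explicit solution, with uniqueness from commutativity, works just as well for merely continuous or $C^1$ curves $u$ in $\fd_c$, so it covers whichever curve class Omori's regularity condition is stated for.
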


For the constant coefficient case, we can characterize all generators of $\rset^n$-positivity preserving semi-groups.

\begin{dfn}\label{dfn:fd+}
Let $n\in\nset$.
We define the set
\[\fD_{c,+} := \{A\in\fD_c \,|\, A\ \text{is a positivity preserver}\}\]
of all \emph{positivity preservers with constant coefficients} and we define the set
\[\fd_{c,+}:=\left\{A\in\fd_c\,\middle|\,\exp(tA)\in\fD_{c,+}\ \text{for all}\ t\geq 0\right\}\]
of all \emph{generators of positivity preservers with constant coefficients}.
\end{dfn}

For the characterization, we need the following result.

\begin{thm}[{\cite[Main Thm.\ 4.7]{didio24posPresConst}}]\label{thm:infinitelyDivisibleMeasure}
Let $n\in\nset$.
Then the following are equivalent:
\begin{enumerate}[(i)]
\item $A\in\fd_{c,+}$.

\item $\exp A$ has an infinitely divisible representing measure.

\item $\exp(tA)$ has an infinitely divisible representing measure for some $t>0$.

\item $\exp(tA)$ has an infinitely divisible representing measure for all $t>0$.
\end{enumerate}
\end{thm}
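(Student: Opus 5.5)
The plan is to use the correspondence between constant-coefficient positivity preservers and moment sequences, and to transport the semigroup structure of \Cref{thm:mainFrechetLieGroups} to convolution of measures. For $B=\sum_\alpha q_\alpha\partial^\alpha\in\fD_c$, \Cref{cor:borcea} says $B$ is a positivity preserver if and only if $(\alpha!\,q_\alpha)_\alpha$ is an $\rset^n$-moment sequence. In that case $Bp(x)=\int p(x+z)\,\diff\mu(z)$, and this is the meaning of ``representing measure of $B$'' in the statement. The measure $\mu$ is a probability measure because $q_0=1$.

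The first step is to note that composition in $\fD_c$ corresponds to convolution of representing measures. If $B_1,B_2\in\fD_{c,+}$ have representing measures $\mu_1,\mu_2$, then
\[B_1B_2p(x)=\iint p(x+z_1+z_2)\,\diff\mu_1\diff\mu_2=\int p(x+z)\,\diff(\mu_1*\mu_2)(z),\]
so $\mu_1*\mu_2$ represents $B_1B_2$. Conversely, if every moment of $\mu$ is finite, then $p\mapsto\int p(\cdot+z)\,\diff\mu$ lies in $\fD_{c,+}$ and its coefficients are the moments divided by $\alpha!$. The same correspondence also gives, for the characteristic-type symbol, $\exp(tA)\leftrightarrow e^{t\,a(\cdot)}$ at the level of formal power series, where $A=\sum d_\alpha\partial^\alpha\mapsto a(\xi)=\sum d_\alpha\xi^\alpha$. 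This is because $\fD_c\cong\{$formal power series with constant term $1\}$ as commutative groups.

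For (i)$\Rightarrow$(ii), let $A\in\fd_{c,+}$. For each $m\in\nset$, $\exp(A/m)\in\fD_{c,+}$ has some representing measure $\nu_m$, and $\exp A=(\exp(A/m))^m$ is represented by $\nu_m^{*m}$. The difficulty is that $\exp A$ may be indeterminate, so $\nu_m^{*m}$ need not be the same measure as a chosen representing measure $\mu$ of $\exp A$. To get one infinitely divisible measure, I would pass to a limit. The moments of $\nu_m$ are polynomial in the coefficients of $A/m$, so they tend to the moments of $\delta_0$. Hence the row-wise triangular array $\nu_m^{*m}$ has uniformly bounded moments of every order. Tightness then lets me extract a weakly convergent subsequence, and the bounded higher moments give uniform integrability, so the limit keeps the moments of $\exp A$. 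The limit of convolution powers $\nu_m^{*m}$ of infinitesimal arrays is infinitely divisible by the classical Khintchine theorem. This yields an infinitely divisible representing measure, and I expect this limiting argument to be the main obstacle.

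For (ii)$\Rightarrow$(iv), let $\mu$ be infinitely divisible and represent $\exp A$. By the L\'evy--Khintchine formula, $\mu=\mu_1$ sits in a convolution semigroup $(\mu_t)_{t\ge0}$ with $\hat\mu_t=e^{t\psi}$. Because $\mu$ has all moments, the L\'evy measure has all moments away from the origin, and so each $\mu_t$ has all moments. Their cumulants are $t$ times those of $\mu$. The cumulant generating series of $\mu$ is $\log$ of the moment series, which is $a$ up to the identification $\partial\leftrightarrow\xi$ and the factorials. Therefore $\mu_t$ represents $\exp(tA)$ and is infinitely divisible, and the uniqueness of $\log$ from \Cref{thm:mainFrechetLieGroups} pins this down.

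The implication (iv)$\Rightarrow$(iii) is trivial. For (iii)$\Rightarrow$(i), suppose $\exp(t_0A)$ has an infinitely divisible representing measure. Applying the previous step to $t_0A$ gives, for every $s\ge0$, a measure representing $\exp(st_0A)$. Hence $\exp(tA)\in\fD_{c,+}$ for all $t\ge0$ by \Cref{cor:borcea}, which is (i). The implication (i)$\Rightarrow$(ii) then closes the cycle.
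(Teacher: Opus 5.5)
The review does not prove this theorem; it only quotes it from the constant-coefficient paper and then combines it with the L\'evy--Khinchin formula to get \Cref{thm:mainPosGenerators}. So there is no in-text argument to match. Judged on its own, your cycle (i)$\Rightarrow$(ii)$\Rightarrow$(iv)$\Rightarrow$(iii)$\Rightarrow$(i) is correct.

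You are right that indeterminacy is the real issue in (i)$\Rightarrow$(ii), and your fix works:
\begin{itemize}
\item All $\nu_m^{*m}$ have exactly the moments of $\exp A$, not merely bounded ones. So a subsequence converges weakly to a measure that still represents $\exp A$.
\item The rows are infinitesimal because the second moments of $\nu_m$ tend to $0$. Khintchine's theorem then makes the limit infinitely divisible.
\end{itemize}
In (ii)$\Rightarrow$(iv) you rely on two standard facts, and they suffice. First, every $\mu_t$ in the L\'evy semigroup inherits all moments (e.g.\ Sato's theorem on moments of L\'evy measures). Second, the cumulants of $\mu_t$ are $t$ times those of $\mu$, so the formal moment series of $\mu_t$ is $e^{t\,a(\xi)}$.

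Your use of L\'evy--Khinchin inside this proof can be avoided, which also fits the review's ordering better. Suppose $\mu=\rho_k^{*k}$ represents $\exp A$. Then:
\begin{itemize}
\item $\rho_k$ has all moments, by an elementary tail estimate for independent sums. Hence it represents some $B_k\in\fD_{c,+}$ with $B_k^k=\exp A$.
\item Bijectivity of $\exp$ in \Cref{thm:mainFrechetLieGroups} forces $B_k=\exp(A/k)$. Hence $\exp(qA)\in\fD_{c,+}$ for all rational $q\geq 0$.
\item Each coefficient of $\exp(tA)$ is a polynomial in $t$. The set of $\rset^n$-moment sequences is closed under coordinatewise limits, since by Haviland it is the dual cone of $\pos(\rset^n)$. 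So $\exp(tA)\in\fD_{c,+}$ for all $t\geq 0$.
\end{itemize}
This gives (ii)$\Rightarrow$(i), and (iii)$\Rightarrow$(i) by applying it to $t_0A$. Then (i)$\Rightarrow$(iv) follows by applying your (i)$\Rightarrow$(ii) to $tA$, which lies in $\fd_{c,+}$ whenever $A$ does.

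That route needs only the Lie-group structure and your compactness step. Yours gives more: an explicit convolution semigroup $(\mu_t)_{t\geq 0}$ of representing measures, and the coefficients of $A$ identified as cumulants. That is exactly the input for \Cref{thm:mainPosGenerators}.
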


With \Cref{thm:infinitelyDivisibleMeasure} and the characterization of infinitely divisible measures by the L\'evy--Khinchin formula we get the following.

\begin{thm}[{\cite[Main Thm.\ 4.11]{didio24posPresConst}}]\label{thm:mainPosGenerators}
Let $n\in\nset$.
Then the following are equivalent:
\begin{enumerate}[(i)]
\item $\displaystyle A = \sum_{\alpha\in\nset_0^n\setminus\{0\}} \frac{a_\alpha}{\alpha!}\cdot\partial^\alpha \in\fd_{c,+}$.

\item There exists a symmetric matrix $\Sigma = (\sigma_{i,j})_{i,j=1}^n\in\rset^n$ with $\Sigma\succeq 0$, a vector $b = (b_1,\dots,b_n)^T\in\rset^n$, and a measure $\nu$ on $\rset^n$ with
\[\nu(\{0\})=0 \qquad\text{and}\qquad \int_{\rset^n} |x^\alpha|~\diff\nu(x)<\infty\]
for $\alpha\in\nset_0^n$ with $|\alpha|\geq 2$ such that
\begin{align*}
a_{e_i} &= b_i + \int_{\|x\|_2\geq 1} x_i~\diff\nu(x) && \text{for all}\ i=1,\dots,n,\\
a_{e_i+e_j} &= \sigma_{i,j} + \int_{\rset^n} x^{e_i + e_j}~\diff\nu(x) && \text{for all}\ i,j=1,\dots,n,
\intertext{and}
a_\alpha &= \int_{\rset^n} x^\alpha~\diff\nu(x) &&\text{for all}\ \alpha\in\nset_0^n\ \text{with}\ |\alpha|\geq 3.
\end{align*}
\end{enumerate}
\end{thm}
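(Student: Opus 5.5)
The plan is to pass from the operator to its symbol and then apply the L\'evy--Khinchin formula, using \Cref{thm:infinitelyDivisibleMeasure} as the bridge. For $A=\sum_{\alpha\neq 0}\frac{a_\alpha}{\alpha!}\partial^\alpha\in\fd_c$, write $e^{tA}=\sum_\alpha \frac{m_\alpha(t)}{\alpha!}\partial^\alpha$. Since $\fD_c$ and $\fd_c$ are commutative, there is an algebra isomorphism between constant-coefficient operators and formal power series, $\partial^\alpha\leftrightarrow z^\alpha$. Under it $e^{tA}$ corresponds to $\exp(t\,a(z))$ with
\[a(z)=\sum_{\alpha\neq0}\frac{a_\alpha}{\alpha!}z^\alpha .\]
By \Cref{cor:borcea}, $e^{tA}$ is a positivity preserver if and only if $(m_\alpha(t))_\alpha$ is an $\rset^n$-moment sequence with some representing measure $\mu_t$. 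In that case $\sum_\alpha \frac{m_\alpha(t)}{\alpha!}z^\alpha$ is the formal moment generating series of $\mu_t$. Hence $t\,a(z)$ is the formal cumulant generating series of $\mu_t$, and $a_\alpha$ is the $\alpha$-th cumulant of $\mu_1$. All of this is purely formal and exact, since the semi-norms on $\fD_c$ only involve finitely many coefficients at a time (\Cref{thm:mainFrechetLieGroups}).

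For (i)$\Rightarrow$(ii), \Cref{thm:infinitelyDivisibleMeasure} makes $\mu=\mu_1$ infinitely divisible with all moments finite. The L\'evy--Khinchin formula then gives its characteristic exponent
\[\psi(\xi)=i\langle b,\xi\rangle-\tfrac12\xi^T\Sigma\xi+\int\big(e^{i\langle x,\xi\rangle}-1-i\langle x,\xi\rangle\one_{\|x\|<1}\big)\,\diff\nu(x)\]
with $\Sigma\succeq0$ and $\nu$ a L\'evy measure. The key facts are:
\begin{itemize}
\item finiteness of all moments of $\mu$ is equivalent to $\int_{\|x\|\ge1}|x^\alpha|\,\diff\nu<\infty$ for all $\alpha$ (a standard result, e.g.\ Sato Thm.~25.3);
\item combined with $\int_{\|x\|<1}\|x\|^2\,\diff\nu<\infty$, this yields $\int|x^\alpha|\,\diff\nu<\infty$ for all $|\alpha|\ge2$.
\end{itemize}
Then $\psi$ is $C^\infty$, and its Taylor coefficients at $0$ (the cumulants) are obtained by differentiating under the integral.

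Differentiating once gives $a_{e_i}=b_i+\int_{\|x\|\ge1}x_i\,\diff\nu$. Twice gives $a_{e_i+e_j}=\sigma_{ij}+\int x^{e_i+e_j}\,\diff\nu$. For $|\alpha|\ge3$ one gets $a_\alpha=\int x^\alpha\,\diff\nu$. These are exactly the formulas in (ii).

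One must still justify that the derivatives of $\log\hat\mu$ at $0$ coincide with the formal cumulants $a_\alpha$. This holds because $\hat\mu$ is smooth with $\hat\mu(0)=1$, and $\log$ of its Taylor series equals the Taylor series of $\log\hat\mu$ as formal series.

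For (ii)$\Rightarrow$(i), reverse the construction: given $(\Sigma,b,\nu)$, define the infinitely divisible $\mu_t$ with triplet $(t\Sigma,tb,t\nu)$. By the same moment criterion, $\mu_t$ has all moments, and its cumulants are $t\,a_\alpha$. So its moment sequence is exactly $(m_\alpha(t))$, and \Cref{cor:borcea} gives $e^{tA}\in\fD_{c,+}$ for all $t\ge0$.

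I expect the main obstacle to be the moment/L\'evy-measure equivalence together with the interchange of derivatives and integral. The plan is to cite the standard theorem for the first. For the second, bound $|\partial_\xi^\alpha(e^{i\langle x,\xi\rangle}-1-i\langle x,\xi\rangle\one)|$ by $C\|x\|^2$ near $0$ and by $C(1+|x^\alpha|)$ away from $0$, and then apply dominated convergence.
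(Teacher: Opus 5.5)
Your proposal is correct and follows essentially the same route as the paper, which obtains the theorem by combining \Cref{thm:infinitelyDivisibleMeasure} with the L\'evy--Khinchin formula. Your cumulant computation, with Sato's moment criterion and dominated differentiation, supplies exactly the details that the paper's one-line justification leaves implicit. One minor correction: in (i)$\Rightarrow$(ii), take $\mu$ to be the infinitely divisible representing measure supplied by \Cref{thm:infinitelyDivisibleMeasure}, not the arbitrary $\mu_1$ from \Cref{cor:borcea}, since the moment problem may be indeterminate. This does not affect your argument, because the cumulants $a_\alpha$ depend only on the moments.
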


The measure $\nu$ in \Cref{thm:mainPosGenerators} is the L\'evy measure from the characterization of the infinitely divisible measure by the L\'evy--Khinchin formula.

For the case of non-constant coefficients we have the following result.
It follows from the constant coefficient case by non-trivial arguments.

\begin{thm}[{\cite[Main Thm.\ 6.12]{didio25KPosPresGen}}]\label{thm:PosGeneratorsGeneral}
Let
\[A = \sum_{\alpha\in\nset_0^n} \frac{a_\alpha}{\alpha!}\cdot\partial^\alpha\]
be with $a_\alpha\in\rset[x_1,\dots,x_n]_{\leq |\alpha|}$ for all $\alpha\in\nset_0^n$.
Then the following are equivalent:
\begin{enumerate}[(i)]
\item $e^{tA}$ is a $\rset^n$-positivity preserver for all $t\geq 0$.

\item For every $y\in\rset^n$, there exist a symmetric matrix $\Sigma(y) = (\sigma_{i,j}(y))_{i,j=1}^n$ with real entries such that $\Sigma(y)\succeq 0$, a vector $b(y) = (b_1(y),\dots,b_n(y))^T\in\rset^n$, a constant $a_0\in\rset$, and a $\sigma$-finite measure $\nu_y$ on $\rset^n$ with
\[\nu_y(\{0\})=0 \qquad\text{and}\qquad \int_{\rset^n} |x^\alpha|~\diff\nu_y(x)<\infty\]
for all $\alpha\in\nset_0^n$ with $|\alpha|\geq 2$ such that
\begin{align*}
a_{e_i}(y) &= b_i(y) + \int_{\|x\|_2\geq 1} x_i~\diff\nu_y(x) && \text{for}\ i=1,\dots,n,\\
a_{e_i+e_j}(y) &= \sigma_{i,j}(y) + \int_{\rset^n} x^{e_i + e_j}~\diff\nu_y(x) && \text{for all}\ i,j=1,\dots,n,
\intertext{and}
a_\alpha(y) &= \int_{\rset^n} x^\alpha~\diff\nu_y(x) &&\text{for}\ \alpha\in\nset_0^n\ \text{with}\ |\alpha|\geq 3.
\end{align*}
\end{enumerate}
\end{thm}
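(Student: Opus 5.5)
The plan is to reduce the non-constant coefficient case to the constant coefficient case, \Cref{thm:mainPosGenerators}, by freezing the coefficients at a point $y\in\rset^n$ and differentiating the semi-group at $t=0$.

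\emph{Well-definedness.} Since $\deg a_\alpha\leq|\alpha|$, the earlier lemma on degree preserving maps shows that $A$ maps $\rset[x_1,\dots,x_n]_{\leq d}$ into itself for every $d$. Hence $A\in\fg$ by \Cref{thm:fgCaracterization}(iv) with $V_d=\rset[x_1,\dots,x_n]_{\leq d}$, and $e^{tA}$ is defined and degree preserving for all $t$.

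\emph{(i)$\Rightarrow$(ii).} Fix $y\in\rset^n$ and write $e^{tA}=\sum_\alpha q_{\alpha,t}\cdot\partial^\alpha$. By \Cref{cor:borcea}, $(\alpha!\, q_{\alpha,t}(y))_\alpha$ is an $\rset^n$-moment sequence for every $t\geq 0$. On each $V_d$ we have $e^{tA}=\one+tA+O(t^2)$. Comparing canonical coefficients gives $q_{\alpha,t}(y)=\delta_{\alpha,0}+t\,a_\alpha(y)/\alpha!+O(t^2)$, with the error uniform on $V_d$ for each $\alpha$.

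The key step is the following claim: the frozen operator $A_y:=\sum_{\alpha\neq0}\frac{a_\alpha(y)}{\alpha!}\partial^\alpha$ lies in $\fd_{c,+}$. To prove it, approximate $e^{sA_y}$ by $\big(e^{(s/k)A}\big)_y^{\,k}$, the $k$-fold power of the constant coefficient operator obtained by freezing at $y$. Each factor $(e^{(s/k)A})_y$ lies in $\fD_{c,+}$ by \Cref{cor:borcea}, since its coefficient sequence is a moment sequence. Products of elements of $\fD_{c,+}$ stay in $\fD_{c,+}$, because they correspond to convolution of measures. Coefficientwise we have $(e^{(s/k)A})_y=\one+\frac sk A_y+\frac sk a_0(y)+O(k^{-2})$. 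Then, using the formula $\exp = \lim(\one+A/k)^k$ of the corollary in \Cref{sec:gen} in the Fréchet group $\fD_c$ of \Cref{thm:mainFrechetLieGroups}, the powers converge to $e^{s a_0(y)}e^{sA_y}$. Moment sequences on $\rset^n$ form a closed cone in the coefficient topology, since $\pos(\rset^n)$ positivity passes to limits. Dividing by the positive scalar $e^{sa_0(y)}$ gives $e^{sA_y}\in\fD_{c,+}$ for all $s\geq0$, hence $A_y\in\fd_{c,+}$. \Cref{thm:mainPosGenerators} applied to $A_y$ then produces $\Sigma(y)$, $b(y)$ and $\nu_y$ with exactly the stated relations, while $a_0$ carries no condition.

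\emph{(ii)$\Rightarrow$(i).} Conversely, (ii) together with \Cref{thm:mainPosGenerators} gives $A_y\in\fd_{c,+}$ for every $y$. We use the Lie–Trotter/Euler type formula $e^{tA}=\lim_k(\one+\frac tk A)^k$, valid on each $V_d$. The aim is to show that $\one+\frac tk A$ is "almost" a positivity preserver, in the sense that its frozen coefficients at $y$ are close to those of $e^{(t/k)(a_0(y)+A_y)}$. One then passes to the limit using closedness of $\pos(\rset^n)$ and of the moment cone.

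The main obstacle is exactly this step. Freezing does not commute with composition when the coefficients are non-constant: $(BC)_y\neq B_yC_y$, because derivatives hit the $x$-dependent coefficients. Concretely, one must show that
\[
\big((\one+\tfrac tk A)^k p\big)(y)=\big(e^{(t/k)A_{y_k}}\cdots e^{(t/k)A_{y_1}}p\big)(y)+o(1)
\]
along suitable "paths" $y_j$. This is effectively a Feller-process or martingale-problem construction in which the Lévy triplets $(\Sigma(y),b(y),\nu_y)$ depend polynomially on $y$.

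I would first try to avoid the path construction. The idea is a positive maximum principle argument: for $p\geq0$, let $u(t)=e^{tA}p$ restricted to $V_d$, a finite dimensional linear ODE. We want to show that $\min_x u(t,x)$, after a penalization $u+\varepsilon(1+|x|^2)^{m}$ to handle non-compactness, cannot become negative. At a minimum point $y$ of a nonnegative-leading function, $(A_y f)(y)\geq0$, since $A_y$ is a Lévy generator satisfying the positive maximum principle. Degree preservation guarantees that the penalization term dominates at infinity, and the term $a_0$ is harmless via $e^{-ta_0}$ rescaling if $a_0$ is constant. This maximum principle route is the one I would pursue.
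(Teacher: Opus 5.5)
Your well-definedness step and your proof of (i)$\Rightarrow$(ii) work, up to two small repairs. First, the frozen factors $(e^{(s/k)A})_y$ have zeroth coefficient $e^{(s/k)a_0}$, so they are positive multiples of elements of $\fD_{c,+}$, not elements of it. Second, $\big(\one+\frac{s}{k}(a_0+A_y)+O(k^{-2})\big)^k$ is not literally the Euler formula of \Cref{sec:gen}. You should pass through $\log$: all factors commute, and $\exp,\log$ are continuous by \Cref{thm:mainFrechetLieGroups}, so $k\log(\cdot)\to s(a_0+A_y)$ coefficientwise.

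The genuine gap is (ii)$\Rightarrow$(i). This is the substantive half, you only outline it, and as outlined it does not close. With a time-independent penalization $v=e^{tA}p+\varepsilon w$, $w=(1+\|x\|^2)^m$, take a first time $t^*$ and a point $y$ where $\min_x v$ reaches $0$. There you only obtain $\partial_t v(t^*,y)=(Av)(y)-\varepsilon(Aw)(y)\geq-\varepsilon(Aw)(y)$, which has no sign, so no contradiction arises. Domination at infinity gives you a minimizer, not this inequality.

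The missing idea is to let the penalization grow in time, using degree preservation a second time. Since $\deg Aw\le 2m$, there is $C\geq 0$ with $Aw\le Cw$ on $\rset^n$. Put $v_\varepsilon(t)=e^{tA}p+\varepsilon e^{Kt}w$ with $K>C$ and $2m>\deg p$. Since $e^{tA}p$ stays in $\rset[x_1,\dots,x_n]_{\le\deg p}$ and depends continuously on $t$, $v_\varepsilon(t,\cdot)$ is coercive uniformly for $t\in[0,T]$. Hence $t\mapsto\min_x v_\varepsilon(t,x)$ is attained and continuous, and it is $\geq\varepsilon$ at $t=0$. At a first zero $t^*$ with minimizer $y$,
\[0\geq\partial_t v_\varepsilon(t^*,y)=(Av_\varepsilon(t^*))(y)+\varepsilon e^{Kt^*}\big(Kw(y)-(Aw)(y)\big)\geq(Av_\varepsilon(t^*))(y)+\varepsilon e^{Kt^*}(K-C)w(y)>0.\]
The last inequality needs the positive minimum principle: $(Af)(y)\geq 0$ for every $f\in\pos(\rset^n)$ with $f(y)=0$. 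You must derive this from (ii), not just quote it. For every polynomial $f$, the moment conditions on $\nu_y$ justify
\[(Af)(y)=a_0f(y)+b(y)\cdot\nabla f(y)+\frac{1}{2}\mathrm{tr}\big(\Sigma(y)\nabla^2 f(y)\big)+\int_{\rset^n}\big(f(y+z)-f(y)-\one_{\{\|z\|<1\}}\,z\cdot\nabla f(y)\big)\,\diff\nu_y(z).\]
At a zero of $f\geq 0$ every term is $\geq 0$. This gives the contradiction, and letting $\varepsilon\to 0$ yields (i).

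Your first route is rightly abandoned: $\one+\frac{t}{k}A$ is not a positivity preserver even for $A=\partial^2$; take $p=(x^2-\eta)^2$ with $0<\eta<4t/k$.

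The same principle also shortens (i)$\Rightarrow$(ii). For $p\in\pos(\rset^n)$ with $p(y)=0$, differentiating $(e^{tA}p)(y)\geq 0$ at $t=0^+$ gives $(Ap)(y)\geq 0$. Translating $p$ transfers this to $A_y$ at every point. The penalization argument then yields $A_y\in\fd_{c,+}$, and \Cref{thm:mainPosGenerators} applies with no Euler limits at all.
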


While \Cref{thm:PosGeneratorsGeneral} solves the case of generators of $\rset^n$-positivity preserving semi-groups which are degree preserving, the general case of an explicit description is still open.

\section{Summary}
\label{sec:sum}

While
\[\cset[x_1,\dots,x_n] \quad\text{and}\quad \pos(K)\]
are very well-studied in (real) algebraic geometry, surprisingly, even the linear operators
\[T:\rset[x_1,\dots,x_n]\to\rset[x_1,\dots,x_n]\]
are very little studied.
Especially under the condition that $T$ is a $K$-positivity preserver, very little was known so far.
That these operators are so little studied is surprising, since in every linear algebra course at first vector spaces $V$ are introduced and investigated (dimension, basis, subspaces etc.) and then the actual linear algebra starts when one looks at linear operators
\[T:V\to V\]
for finite-dimensional $V$, i.e., $T$ are matrices.
This step going from the vector spaces $\cset[x_1,\dots,x_n]$ and cones $\pos(K)$ to linear maps on them is so far hardly considered.
There are possibly two reasons for that.
Firstly, (real) algebraic geometry heavily relies on multiplication (in $\cset[x_1,\dots,x_n]$) and in general we have
\[T(f\cdot g) \neq Tf \cdot Tg,\]
i.e., we lose the very basic operation that makes algebra into algebra and not just the study of vector spaces.
Secondly, $\cset[x_1,\dots,x_n]$ is infinite dimensional and hence topological aspects come into play, which are usually not needed by algebraists.

These limitations are only overcome by looking from an (unusual) functional analytic point of view on these matters.
While moments and measures are deeply connected to $\pos(K)$ via duality, ``ordinary'' functional analysis works on Hilbert spaces and finite dimensional Lie groups.
The restriction to work only on $\cset[x_1,\dots,x_n]$ and not being allowed to make it into a Hilbert space (like is usually done by working with moments) forces us to drop the rich theory of Hilbert spaces and go to Fr\'echet spaces, LF-spaces, and the regular Fr\'echet Lie groups.
Here, more technical details have to be considered since we no longer have only one norm as in a Hilbert space but we have to work with families of semi-norm.

However, the current review about the works \cite{didio24posPresConst,didio25KPosPresGen,didio25hadamardLanger,didio25PosToSoS} shows that still results can be obtained and that we possibly (and hopefully) stand here only at the very beginning of the investigation of linear operators
\[T:\rset[x_1,\dots,x_n]\to\rset[x_1,\dots,x_n].\]

\section*{Funding}

The author and this project are supported by the Deutsche Forschungs\-gemein\-schaft DFG with the grant DI-2780/2-1 and his research fellowship at the Zukunfts\-kolleg of the University of Konstanz, funded as part of the Excellence Strategy of the German Federal and State Government.


\providecommand{\bysame}{\leavevmode\hbox to3em{\hrulefill}\thinspace}
\providecommand{\MR}{\relax\ifhmode\unskip\space\fi MR }
\providecommand{\MRhref}[2]{%
  \href{http://www.ams.org/mathscinet-getitem?mr=#1}{#2}
}
\providecommand{\href}[2]{#2}

\end{document}